\newtheorem{theorem}{Theorem}[section]
\newtheorem{lemma}[theorem]{Lemma}
\newtheorem{proposition}[theorem]{Proposition}
\DeclareMathOperator{\add}{add}
\DeclareMathOperator{\Cok}{Cok}
\DeclareMathOperator{\End}{End}
\DeclareMathOperator{\Ext}{Ext}
\DeclareMathOperator{\Hom}{Hom}
\let\Im\relax\DeclareMathOperator{\Im}{Im}
\DeclareMathOperator{\Ker}{Ker}
\let\mod\relax\DeclareMathOperator{\mod}{mod}
\DeclareMathOperator{\sub}{sub}
\newcommand{\1}{\mathds{1}}
\newcommand{\CC}{\mathcal{C}}
\newcommand{\CE}{\mathcal{E}}
\newcommand{\CI}{\mathcal{I}}
\newcommand{\CP}{\mathcal{P}}
\newcommand{\epi}{\mathrm{epi}}
\newcommand{\Extc}{\Ext_\mathcal{C}^1}
\newcommand{\Homc}{\Hom_\mathcal{C}}
\newcommand{\Homco}{\overline\Hom_\mathcal{C}}
\newcommand{\Homcu}{\underline\Hom_\mathcal{C}}
\newcommand{\To}{\longrightarrow}
\newcommand{\op}{\mathrm{op}}
\newcommand{\set}[1]{\left\{#1\right\}}
\title[Auslander's defect formula and a commutative triangle]
  {Auslander's defect formula and a commutative triangle
  in an exact category}
\subjclass[2010]{16G70, 18E10}
\keywords{Auslander's defect formula,
  Auslander bijections,
  morphisms determined by objects}
\date{\today}
\author{Pengjie Jiao}
\address{Department of Mathematics,
  China Jiliang University,
  Hangzhou 310018, PR China}
\email{jiaopjie@cjlu.edu.cn}
\begin{document}

\begin{abstract}
  We prove Auslander's defect formula in an exact category, and obtain a commutative triangle involving the Auslander bijections and the generalized Auslander--Reiten duality.
\end{abstract}

\maketitle

\section{Introduction}

Throughout, $k$ denotes a commutative artinian ring. We consider a $k$-linear Hom-finite Krull--Schmidt exact category, which is essentially small.

Recall that Auslander's defect formula appeared as \cite[Theorem~III.4.1]{Auslander1978Functors} for the first time.
Krause \cite{Krause2003short} gave a short proof for the formula in module categories.
A higher analogue of the formula was given as \cite[Lemma~3.2]{Iyama2007Higher};
see also \cite[Theorem~3.8]{JassoKvamme2019introduction}.
For an exact category, the notion of \emph{generalized Auslander--Reiten duality} was introduced in \cite[Section~3]{Jiao2018generalized}.
As another analogue, we deduce Auslander's defect formula in an exact category; see Proposition~\ref{prop:defect} below.

The notion of \emph{Auslander bijection} in module categories was introduced by Ringel \cite{Ringel2013Auslander}.
Chen \cite[Theorem~4.6]{Chen2017Auslander} established a commutative triangle involving Auslander bijections, universal extensions and the Auslander--Reiten duality.
Considering the generalized Auslander--Reiten daulity, the commutative triangle could be established locally in an exact category; see Theorem~\ref{thm:triangle} below.

This paper is organized as follows.
In Section~2, we recall the defects and the generalized Auslander--Reiten duality, and deduce Proposition~\ref{prop:defect}.
Section~3 is dedicated to the proof of Theorem~\ref{thm:triangle}.

\section{Auslander's defect formula}

Let $k$ be a commutative artinian ring and let $\check{k}$ be the minimal injective cogenerator.
We denote by $\mod k$ the category of finitely generated $k$-modules and by $D=\Hom_k(-,\check{k})$ the Matlis duality.

\subsection{Exact categories}

Recall that an \emph{exact category} is an additive category $\CC$ together with a collection $\CE$ of exact pairs $(i,d)$, which satisfies the axioms in \cite[Appendix~A]{Keller1990Chain}.
Here, an exact pair $(i,d)$ means a sequence of morphisms $X \xrightarrow{i} Y \xrightarrow{d} Z$ such that $i$ is the kernel of $d$ and $d$ is the cokernel of $i$.
An exact pair $(i,d)$ in $\CE$ is called a \emph{conflation}, while $i$ is called an \emph{inflation} and $d$ is called a \emph{deflation}.
For a pair of objects $X$ and $Y$ in $\CC$, we denote by $\Extc(X,Y)$ the set of equivalence classes of conflations $Y\to E\to X$.

We will consider a $k$-linear Hom-finite Krull--Schmidt exact category $\CC$, which is essentially small. 

Recall from \cite[Section~2]{LenzingZuazua2004Auslander} that a morphism $f\colon X\to Y$ is called \emph{projectively trivial} if for each object $Z$, the induced map
\[
  \Extc(f,Z) \colon \Extc(Y,Z) \To \Extc(X,Z)
\]
is zero.
We observe that $f$ is projectively trivial if and only if $f$ factors through any deflation ending at $Y$.
Dually, we call $f$ \emph{injectively trivial} if for each object $Z$, the induced map
\[
  \Extc(Z,f) \colon \Extc(Z,X) \To \Extc(Z,Y)
\]
is zero.

Given a pair of objects $X$ and $Y$, we denote by $\CP(X,Y)$ the set of projectively trivial morphisms $X \to Y$. Then $\CP$ forms an ideal of $\CC$. We set $\underline{\CC} = \CC/\CP$.
Given a morphism $f \colon X \to Y$, we denote by $\underline{f}$ its image in $\underline{\CC}$.
We denote by
\[
  \Homcu(X,Y) = \Homc(X,Y)/\CP(X,Y)
\]
the set of morphisms $X \to Y$ in $\underline{\CC}$.

Dually, we denote by $\CI(X,Y)$ the set of injectively trivial morphisms $X \to Y$. Set $\overline{\CC} = \CC/\CI$. Given a morphism $f \colon X \to Y$, we denote by $\overline{f}$ its image in $\overline{\CC}$.
We denote by
\[
  \Homco(X,Y) = \Homc(X,Y)/\CI(X,Y)
\]
the set of morphisms $X \to Y$ in $\overline{\CC}$.

Let $K \in \CC$, and let
\[
  \xi \colon X \xrightarrow{i} E \xrightarrow{d} Y
\]
be a conflation.
For any $f\colon K\to Y$, we let $\xi.f$ be the conflation obtained by pullback of $\xi$ along $f$; for any $g\colon X\to K$, we let $g.\xi$ be the conflation obtained by pushout of $\xi$ along $g$.
Consider the \emph{connecting maps}
\begin{align*}
  c(K, \xi) \colon & \Homcu(K, Y) \To \Extc(K, X), \quad
  \underline{f} \mapsto \xi.f,\\
  c(\xi, K) \colon & \Homco(X, K) \To \Extc(Y, K), \quad
  \overline{g}  \mapsto g.\xi.
\end{align*}
They are well defined, since $\xi.f$ splits if $f \in \CP(K,Y)$ and $g.\xi$ splits if $g \in \CI(X,K)$.
We mention the following well-known fact.
\begin{lemma}\label{lem:exact}
  There exist the exact sequences of $k$-modules
  \begin{gather*}
    \Homcu(K,E) \To \Homcu(K,Y) \xrightarrow{c(K,\xi)} \Extc(K,X) \To \Extc(K,E),\\
    \Homco(E,K) \To \Homco(X,K) \xrightarrow{c(\xi,K)} \Extc(Y,K) \To \Extc(E,K),
  \end{gather*}
  natural in both $\xi$ and $K$.
\end{lemma}

\begin{proof}
  We verify the exactness of the first sequence.
  Consider the exact sequence
  \[
    \Homc(K,E) \To \Homc(K,Y) \overset{h}\To \Extc(K,X) \To \Extc(K,E).
  \]
  Here, $h(f) = \xi.f$.
  We observe that $\CP(K,Y) \subseteq \Ker h$ and $c(K,\xi)$ is precisely induced by $h$.
  Then
  \[
    \begin{split}
      \Im \Homcu(K,d) & = \Im \Homc(K,d) + \CP(K,Y) \\
                      & = \Ker h + \CP(K,Y) \\
                      & = \Ker c(K,\xi),
    \end{split}
  \]
  and
  \[
    \Im c(K,\xi) = \Im h = \Ker \Extc(K,i).
    \qedhere
  \]
\end{proof}

\subsection{Generalized Auslander--Reiten duality}

Recall from \cite[Section~3]{Jiao2018generalized} that the \emph{generalized Auslander--Reiten duality} on $\CC$ is a sextuple $\set{\CC_r, \CC_l, \phi, \psi, \tau, \tau^-}$.
Here,
\[
  \CC_r =
  \set{
    X\in\CC \middle|
    D\Extc(X,-) \colon \overline{\CC} \to \mod k
    \mbox{ is representable}
  }
\]
and
\[
  \CC_l =
  \set{
    X\in\CC \middle|
    D\Extc(-,X) \colon \underline{\CC} \to \mod k
    \mbox{ is representable}
  }
\]
are subcategories of $\CC$.
We have a pair of mutually quasi-inverse equivalences
\[
  \tau \colon \underline{\CC_r}
  \overset\sim\To
  \overline{\CC_l}
  \quad\mbox{and}\quad
  \tau^- \colon \overline{\CC_l}
  \overset\sim\To
  \underline{\CC_r}.
\]
For any $X \in \CC$ and $Y \in \CC_r$, we have a natural isomorphism
\[
  \phi_{Y,X} \colon \Homco(X, \tau Y)  \overset\sim\To D\Extc(Y, X).
\]
For any $X \in \CC_l$ and $Y \in \CC$, we have a natural isomorphism
\[
  \psi_{X,Y} \colon \Homcu(\tau^-X, Y) \overset\sim\To D\Extc(Y, X).
\]

For any $Y\in\CC_r$, we let
\[
  \alpha_Y = \phi_{Y,\tau Y} (\overline{\1_{\tau Y}})
  \in D\Extc(Y, \tau Y)
\]
and
\[
  \underline{\mu_Y} =
  \psi_{\tau Y,Y}^{-1} (\alpha_Y)
  \colon \tau^-\tau Y \To Y.
\]
For any $X\in\CC_l$, we let
\[
  \beta_X = \psi_{X,\tau^-X} (\underline{\1_{\tau^-X}})
  \in D\Extc(\tau^-X, X)
\]
and
\[
  \overline{\nu_X} =
  \phi_{\tau^-X,X}^{-1} (\beta_X)
  \colon X \To \tau\tau^-X.
\]

%The following lemma is contained in the proof of \cite[Proposition~3.4]{Jiao2018generalized}.
We mention the following relationship between $\alpha$ and $\beta$.

\begin{lemma}\label{lem:linear}
  Let $X \in \CC_l$ and $Y \in \CC_r$. Then
  \[
    \alpha_Y = \beta_{\tau Y} \circ \Extc(\underline{\mu_Y}, \tau Y)
  \]
  and
  \[
    \beta_X = \alpha_{\tau^-X} \circ \Extc(\tau^-X, \overline{\nu_X}).
  \]
\end{lemma}

\begin{proof}
  We only prove the first equality.
  Consider the commutative diagram
  \[\begin{tikzcd}[sep=large, column sep=4em]
    \Homcu(\tau^-\tau Y,\tau^-\tau Y)
      \rar["{\psi_{\tau Y,\tau^-\tau Y}}"]
      \dar["{\Homcu(\tau^-\tau Y,\underline{\mu_Y})}"']
    &D\Extc(\tau^-\tau Y,\tau Y)
      \dar["{D\Extc(\underline{\mu_Y}, \tau Y)}"]
    \\
    \Homcu(\tau^-\tau Y,Y)
      \rar["{\psi_{\tau Y,Y}}"]
    &D\Extc(Y,\tau Y).
  \end{tikzcd}\]
  By a diagram chasing, we obtain
  \[\begin{split}
    \alpha_Y &= \psi_{\tau Y,Y} (\underline{\mu_Y})\\
    &= (\psi_{\tau Y,Y} \circ
      \Homcu(\tau^-\tau Y, \underline{\mu_Y}))
      (\underline{\1_{\tau^-\tau Y}})\\
    &= (D\Extc(\underline{\mu_Y}, \tau Y) \circ \psi_{\tau Y,\tau^-\tau Y})
      (\underline{\1_{\tau^-\tau Y}})\\
    &= D\Extc(\underline{\mu_Y}, \tau Y) (\beta_{\tau Y})\\
    &= \beta_{\tau Y} \circ \Extc(\underline{\mu_Y}, \tau Y).
  \end{split}\]
  Here, the first and the fourth equalities hold by the definitions of $\underline{\mu_Y}$ and $\beta_{\tau Y}$, respectively.
  The third one holds by the commutative diagram.
\end{proof}

We mention the following commutative diagrams;
see \cite[Lemma~4.3]{Chen2017Auslander}.
Here we make the proof more explicit.
It will be used in the proof of Theorem~\ref{thm:triangle} below.

\begin{lemma}
  \label{lem:connect}
  Let $\xi \colon X \to E \to Y$ be a conflation in $\CC$.
  \begin{enumerate}
    \item\label{lem:connect:1}
      For any $K \in \CC_r$, there exists a commutative diagram
      \[\begin{tikzcd}[sep=large]
        \Homco(X, \tau K) &\Extc(Y, \tau K)\\
        D \Extc(K, X)     &D \Homcu(K, Y),
        \ar[from=1-1, to=1-2, "{c(\xi, \tau K)}"]
        \ar[from=1-1, to=2-1, "\phi_{K, X}"']
        \ar[from=2-1, "{D c(K, \xi)}"]
        \ar[from=1-2, "{D (\psi_{\tau K, Y} \circ
          \Homcu(\underline{\mu_K}, Y))}"]
      \end{tikzcd}\]
      which is natural in both $\xi$ and $K$.
    \item\label{lem:connect:2}
      For any $K \in \CC_l$, there exists a commutative diagram
      \[\begin{tikzcd}[sep=large]
        \Homcu(\tau^-K, Y) &\Extc(\tau^-K, X)\\
        D \Extc(Y, K)      &D \Homco(X, K),
        \ar[from=1-1, to=1-2, "{c(\tau^-K, \xi)}"]
        \ar[from=1-1, to=2-1, "\psi_{K, Y}"']
        \ar[from=2-1, "{D c(\xi, K)}"]
        \ar[from=1-2, "{D (\phi_{\tau^-K, X} \circ
          \Homco(X, \overline{\nu_K}))}"]
      \end{tikzcd}\]
      which is natural in both $\xi$ and $K$.
  \end{enumerate}
\end{lemma}

\begin{proof}
  We only prove (\ref{lem:connect:1}). Set
  \[
    \psi' = \psi_{\tau K, Y} \circ \Homcu(\underline{\mu_K}, Y)
    \colon \Homcu(K, Y) \To D \Extc(Y, \tau K).
  \]
  Then we obtain the $k$-linear map
  \[
    D\psi' \colon \Extc(Y, \tau K) \To D\Homcu(K, Y), \quad
    \zeta \mapsto \Big( \underline{f} \mapsto \psi'(\underline{f})(\zeta) \Big).
  \]
  For any $g \colon X \to \tau K$ and $h \colon K \to Y$, we have
  \[\begin{split}
    ( D \psi' \circ c(\xi, \tau K) ) (\overline{g}) (\underline{h})
    &= ( \psi' (\underline{h}) ) ( c(\xi, \tau K) (\overline{g}) )\\
    &= ( \psi' (\underline{h}) ) (g.\xi)\\
    &= \psi_{\tau K, Y} (\underline{h \circ \mu_K}) (g.\xi).
  \end{split}\]
  Here, the second and the third equalities hold by the definitions of $c(\xi, \tau K)$ and $\psi'$, respectively.

  On the other hand, by the definition of $c(K, \xi)$, we obtain
  \[
    (D c(K, \xi) \circ \phi_{K, X}) (\overline{g}) (\underline{h})
    = ( \phi_{K, X} (\overline{g}) \circ c(K, \xi) ) (\underline{h})
    = \phi_{K, X} (\overline{g}) (\xi.h).
  \]
  Consider the following commutative diagram
  \[\begin{tikzcd}[sep=large]
    \Homco(\tau K,\tau K)
      \rar["{\phi_{K, \tau K}}"]
      \dar["{\Homco(\overline{g}, \tau K)}"']
    &D\Extc(K, \tau K)
      \dar["{D\Extc(K, \overline{g})}"]
    \\
    \Homco(X, \tau K)
      \rar["{\phi_{K, X}}"]
    &D\Extc(K, X).
  \end{tikzcd}\]
  We have
  \[\begin{split}
    \phi_{K, X} (\overline{g})
    &= ( \phi_{K, X} \circ \Homco(\overline{g}, \tau K) )
       (\overline{\1_{\tau K}})\\
    &= ( D\Extc(K, \overline{g}) \circ \phi_{K, \tau K} )
       (\overline{\1_{\tau K}})\\
    &= \alpha_K \circ \Extc(K, \overline{g})\\
    &= \beta_{\tau K} \circ \Extc(\underline{\mu_K}, \tau K) \circ \Extc(K, \overline{g}).
  \end{split}\]
  Here, the third equality holds by the definition of $\alpha_K$. The fourth one holds by Lemma~\ref{lem:linear}.
  It follows that
  \[\begin{split}
    (D c(K, \xi) \circ \phi_{K, X}) (\overline{g}) (\underline{h})
    &= ( \beta_{\tau K} \circ \Extc(\mu_K, \tau K) \circ \Extc(K, \overline{g}) )
       (\xi.h)\\
    &= \beta_{\tau K} (g.\xi.h.\mu_K)\\
    &= ( \beta_{\tau K} \circ \Extc(\underline{h \circ \mu_K}, \tau K) ) (g.\xi).
  \end{split}\]
  Then it remains to show
  \[
    \psi_{\tau K, Y} (\underline{h \circ \mu_K}) =
    \beta_{\tau K} \circ \Extc(\underline{h \circ \mu_K}, \tau K).
  \]

  Consider the following commutative diagram
  \[\begin{tikzcd}[sep=large, column sep=4em]
    \Homcu(\tau^-\tau K, \tau^-\tau K)
      \rar["{\psi_{\tau K, \tau^-\tau K}}"]
      \dar["{\Homcu(\tau^-\tau K, \underline{h \circ \mu_K})}"']
    &D\Extc(\tau^-\tau K, \tau K)
      \dar["{D\Extc(\underline{h \circ \mu_K}, \tau K)}"]
    \\
    \Homcu(\tau^-\tau K, Y)
      \rar["{\psi_{\tau K, Y}}"]
    &D\Extc(K, \tau Y).
  \end{tikzcd}\]
  We have
  \[\begin{split}
    \psi_{\tau K, Y} (\underline{h \circ \mu_K})
    &= ( \psi_{\tau K, Y} \circ \Homcu(\tau^-\tau K, \underline{h \circ \mu_K}) )
       (\underline{\1_{\tau^-\tau K}})\\
    &= ( D\Extc(\underline{h \circ \mu_K}, \tau K) \circ \psi_{\tau K, \tau^-\tau K} )
       (\underline{\1_{\tau^-\tau K}})\\
    &= \beta_{\tau K} \circ \Extc(\underline{h \circ \mu_K}, \tau K).
  \end{split}\]
  Here, the third equality holds by the definition of $\beta_{\tau Y}$.
  Then the result follows.
\end{proof}

\subsection{Defect formula}

Let $\xi \colon X \xrightarrow{i} E \xrightarrow{d} Y$ be a conflation.
The \emph{covariant defect} $\xi_*$ and the \emph{contravariant defect} $\xi^*$ are defined by the exact sequences of functors
\begin{gather*}
  0 \To \Homc(Y,-) \To \Homc(E,-) \To \Homc(X,-) \To \xi_* \To 0,\\
  0 \To \Homc(-,X) \To \Homc(-,E) \To \Homc(-,Y) \To \xi^* \To 0.
\end{gather*}

Consider the following exact sequences of functors
\begin{gather*}
  \Homc(E,-) \To \Homc(X,-) \To \Extc(Y,-) \To \Extc(E,-),\\
  \Homc(-,E) \To \Homc(-,Y) \To \Extc(-,X) \To \Extc(-,E).
\end{gather*}
It follows that
\[
  \xi_* \cong \Ker \Extc(d,-)
  \quad \mbox{and} \quad
  \xi^* \cong \Ker \Extc(-,i).
\]
In particular, $\xi_*$ vanishes on injectively trivial morphisms and $\xi^*$ vanishes on projectively trivial morphisms.
They induce the functors
\[
  \xi_* \colon \overline{\CC} \to \mod k
  \quad \mbox{and} \quad
  \xi^* \colon \underline{\CC} \to \mod k.
\]
Moreover, following Lemma~\ref{lem:exact}, they can be expressed as follows.
\begin{lemma}\label{lem:defect}
  There exist isomorphisms of functors
  \[
    \begin{aligned}[b]
      \Ker \Extc(d, -) \cong \xi_* \cong \Cok \Homco(i, -)
      \colon & \overline{\CC} \To \mod k,\\
      \Ker \Extc(-, i) \cong \xi^* \cong \Cok \Homcu(-, d)
      \colon & \underline{\CC} \To \mod k.
    \end{aligned}
    \eqno\qed
  \]
\end{lemma}

Following \cite{Krause2003short}, Auslander's defect formula is a consequence of the previous fact and the generalized Auslander--Reiten duality;
compare \cite[Theorem~3.8]{JassoKvamme2019introduction}
and \cite[Theorem~III.4.1]{Auslander1978Functors}.

\begin{proposition}\label{prop:defect}
  Let $\xi \colon X \xrightarrow{i} E \xrightarrow{d} Y$ be a conflation in $\CC$.
  \begin{enumerate}
    \item\label{prop:defect:1}
      For any $K \in \CC_r$, there exists an isomorphism
      \[
        \xi_* (\tau K) \cong D \xi^* (K),
      \]
      which is natural in both $\xi$ and $K$.
    \item\label{prop:defect:2}
      For any $K \in \CC_l$, there exists an isomorphism
      \[
        \xi^* (\tau^-K) \cong D \xi_* (K),
      \]
      which is natural in both $\xi$ and $K$.
  \end{enumerate}
\end{proposition}

\begin{proof}
  We only prove (\ref{prop:defect:1}).
  For any $K \in \CC_r$, we have the natural isomorphisms
  \[
    \begin{split}
      D \xi^*(K) & \cong D \Cok \Homcu(K, d) \\
                 & \cong \Ker D \Homcu(K, d) \\
                 & \cong \Ker \Extc(d, \tau K) \\
                 & \cong \xi_*(\tau K).
    \end{split}
  \]
  Here, the first and the last isomorphisms follow from Lemma~\ref{lem:defect}.
  The third one follows from the generalized Auslander--Reiten duality.
  Their naturality in $\xi$ is a direct verification.
%
%  And
%  \[
%    \xi_*(\tau K) \cong \Ker \Extc(d, \tau K).
%  \]
%  We observe by the generalized Auslander--Reiten duality that $\Ker D \Homcu(K, d) \cong \Ker \Extc(d, \tau K)$. Then $\xi_* (\tau K) \cong D \xi^* (K)$.
%  The naturality in $\xi$ is a direct verification.
%
%  The generalized Auslander--Reiten duality gives a commutative diagram
%  \[
%    \begin{tikzcd}[column sep=5.5em]
%      D \Homcu(K, Y)    & D \Homcu(K, E)   \\
%      \Extc (Y, \tau K) & \Extc (E, \tau K).
%      \ar[from=1-1, to=1-2, "{D\Homcu(K, d)}"]
%      \ar[from=2-1, to=2-2, "{\Extc(d, \tau K)}"]
%      \ar[from=1-1, to=2-1, "\cong"]
%      \ar[from=1-2, to=2-2, "\cong"]
%    \end{tikzcd}
%  \]
%  Then $\Ker D \Homcu(K, d) \cong \Ker \Extc(d, \tau K)$, and hence $\xi_* (\tau K) \cong D \xi^* (K)$.
%  The naturality in $\xi$ is a direct verification.
\end{proof}

\section{A commutative triangle}

Recall that two morphism $f \colon X \to Y$ and $f' \colon X' \to Y$ are called \emph{right equivalent} if they factor through each other.
Denote by $[f\rangle$ the right equivalence class containing $f$, and by $[\to Y \rangle$ the class of right equivalence classes of morphisms ending to $Y$.
We mention that $[\to Y \rangle$ is a poset; see \cite[Section~I.2]{Ringel2013Auslander}.
Here, $[ f \rangle \leq [f' \rangle$ means that $f$ factors through $f'$. We denote by $[\to Y \rangle_\epi$ the subset of $[\to Y \rangle$ formed by deflations.

Let $C \in \CC$ and $\Gamma(C) = \End_\CC(C)$. Denote by $\add C$ the category of direct summands of finite direct sums of $C$.
For a $\Gamma(C)$-module $M$, we denote by $\sub_{\Gamma(C)} M$ the lattice of its submodules.
For any $Y \in \CC$, we have a morphism of posets
\[
  \eta_{C,Y} \colon [\to Y \rangle_\epi \To
  \sub_{\Gamma(C)^\op} \Homcu(C, Y), \quad
  [d\rangle \mapsto \Im \Homcu(C, d).
\]

Recall that a morphism $f \colon X \to Y$ is called \emph{right $C$-determined}, if for any $T \in \CC$ and any $g \colon T \to Y$, the following conditions are equivalent.
\begin{enumerate}
  \item $g$ factors through $f$.
  \item $g \circ h$ factors through $f$ for any $h \colon C \to T$.
\end{enumerate}

Denote by $^C[\to Y \rangle_\epi$ the subset of $[\to Y \rangle_\epi$ consisting of classes $[d \rangle$ that $d$ is a right $C$-determined deflation, and by $_C[\to Y \rangle_\epi$ the one consisting of classes $[d \rangle$ that there exists some deflation $d' \colon X' \to Y$ with $d' \in [d\rangle$ and $\Ker d \in \add C$.

We mention the following fact.
Here, we follow the term of \cite[Proposition~4.5]{Chen2017Auslander};
see also \cite[Section~XI.2]{AuslanderReitenSmalo1995Representation}
and \cite{Ringel2012Morphisms}.

\begin{lemma}
  \label{lem:epi-det}
  If $K \in \CC_l$, then
  \[
    _K[\to Y \rangle_\epi = {^{\tau^-K}[\to Y \rangle_\epi}.
  \]
\end{lemma}

\begin{proof}
  We observe that each deflation whose kernel lies in $\add K$ is right $\tau^-K$-determined; see \cite[Lemma~4.2]{JiaoLe2018Auslander}.
  It follows that
  \[
    _K[\to Y \rangle_\epi \subseteq {^{\tau^-K}[\to Y \rangle_\epi}.
  \]

  On the other hand, let $d \colon X \to Y$ be a right $\tau^-K$-determined deflation.
  One can see that $\Im \Homc(\tau^-K, d)$ is a $\Gamma(\tau^-K)^\op$-submodule of $\Homc(\tau^-K, Y)$.
  Since $d$ is a deflation, we have $\CP(\tau^-K, Y) \subseteq \Im \Homc(\tau^-K, d)$.
  We observe that there exists some right $\tau^-K$-determined deflation $d'$ such that $\Ker d' \in \add K$ and $\Im \Homc(\tau^-K, d') = \Im \Homc(\tau^-K, d)$;
  see \cite[Theorem~4.3]{JiaoLe2018Auslander}.
  It is a direct verification that $d$ and $d'$ are right equivalent, since they are both right $\tau^-K$-determined.
  Then $[d\rangle$ lies in $_K[\to Y \rangle_\epi$.
  It follows that
  \[
    _K[\to Y \rangle_\epi \supseteq {^{\tau^-K}[\to Y \rangle_\epi}.
    \qedhere
  \]
\end{proof}

Given a deflation $d \colon X \to Y$, we denote by $\xi_d$ the conflation $\Ker d \to X \xrightarrow{d} Y$.
For any $K \in \CC$, we set
\[
  \delta_{K,Y} ([d\rangle) = \Im c(\xi_d, K),
\]
which is a $\Gamma(K)$-submodule of $\Extc(Y, K)$.
We mention the following fact.

\begin{lemma}[{\cite[Propositions~2.4]{Chen2017Auslander}}]
  \label{lem:delta}
  For any $K \in \CC$, the restricted map
  \[
    \delta_{K,Y} \colon {_K[\to Y \rangle_\epi} \To \sub_{\Gamma(K)} \Extc(Y, K)
  \]
  is an anti-isomorphism of posets.
  \qed
\end{lemma}

Consider the natural isomorphism
\[
  \psi_{K,Y} \colon \Homcu(\tau^-K, Y) \To D \Extc(Y, K).
\]
It induces an anti-isomorphism of posets
\[\begin{split}
  \gamma_{K, Y} \colon \sub_{\Gamma(K)} \Extc(Y, K)
  &\To \sub_{\Gamma(\tau^-K)^\op} \Homcu(\tau^-K, Y),\\
  L &\longmapsto \psi_{K,Y}^{-1}(D (\Extc(Y, K)/L) ).
\end{split}\]
Here, we identify $D (\Extc(Y, K)/L)$ with the subset of $D \Extc(Y, K)$ formed by the $k$-linear maps vanishing on $L$.

The following result establishes the Auslander bijection. This is a slight generalization of \cite[Theorem~4.6]{Chen2017Auslander}.

\begin{theorem}\label{thm:triangle}
  For any $Y \in \CC$ and $K \in \CC_l$, there exists the commutative bijection triangle
  \[\begin{tikzcd}[column sep=0em]
    &\sub_{\Gamma(K)} \Extc(Y, K)\\
    ^{\tau^-K}[\to Y \rangle_\epi &&\sub_{\Gamma(\tau^-K)^\op} \Homcu(\tau^-K, Y).
    \ar[from=2-1, to=1-2, start anchor=north east, "{\delta_{K, Y}}"]
    \ar[from=2-1, "{\eta_{\tau^-K, Y}}"]
    \ar[from=1-2, end anchor=north west, "{\gamma_{K, Y}}"]
  \end{tikzcd}\]
\end{theorem}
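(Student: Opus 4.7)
The plan is to deduce that all three maps in the triangle are bijections by establishing the single commutativity identity $\eta_{\tau^- K, Y} = \gamma_{K, Y} \circ \delta_{K, Y}$. Lemma~\ref{lem:poset-iso}(1) and (2) together say $\delta_{K, Y}$ is an anti-isomorphism of posets ${^{\tau^- K}[\to Y \rangle_\epi} \to \sub_{\Gamma(K)} \Extc(Y, K)$, and $\gamma_{K, Y}$ is an anti-isomorphism by construction, since $\psi_{K, Y}$ is a $k$-linear isomorphism between finitely generated $k$-modules and hence induces an anti-isomorphism on submodule lattices. Granting the identity, the map $\eta_{\tau^- K, Y}$ is then automatically an isomorphism of posets.

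For a representative deflation $d \colon X \to Y$ with $\Ker d \in \add(\tau^- K)$, I would apply Lemma~\ref{lem:connect}(2) to the conflation $\xi_d \colon \Ker d \to X \xto{d} Y$, obtaining the commutative square
\[
  D c(\xi_d, K) \circ \psi_{K, Y}
  = D\bigl(\phi_{\tau^- K, \Ker d} \circ \Homco(\Ker d, \overline{\nu_K})\bigr) \circ c(\tau^- K, \xi_d).
\]
Since $K \in \CC_l$ forces $\tau^- K \in \CC_r$, the map $\phi_{\tau^- K, \Ker d}$ is an isomorphism, and together with the isomorphism $\overline{\nu_K}$ in $\overline{\CC_l}$ this makes the right vertical map an iso. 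Combined with the iso $\psi_{K, Y}$, the square matches up the kernels of the two horizontal maps under $\psi_{K, Y}^{-1}$.

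Identifying those two kernels finishes the argument. The $\Ext$ long exact sequence attached to $\xi_d$ gives $\Ker c(\tau^- K, \xi_d) = \Im \Homc(\tau^- K, d)$ inside $\Homc(\tau^- K, Y)$, and this image survives unchanged upon passage to $\Homcu(\tau^- K, Y)$, because any projectively trivial morphism into $Y$ already factors through the deflation $d$; hence the upper kernel equals $\Im \Homcu(\tau^- K, d) = \eta_{\tau^- K, Y}([d\rangle)$. On the other row, $\Ker D c(\xi_d, K)$ is the annihilator of $\Im c(\xi_d, K) = \delta_{K, Y}([d\rangle)$, canonically identified with $D\bigl(\Extc(Y, K)/\delta_{K, Y}([d\rangle)\bigr)$, whose $\psi_{K, Y}$-preimage is by definition $\gamma_{K, Y}(\delta_{K, Y}([d\rangle))$. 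Equating the two yields the desired identity.

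The main delicacy will lie in the kernel identifications, in particular in tracking the connecting maps through the quotients $\underline{\CC}$ and $\overline{\CC}$, and in verifying that both constituents of the right vertical map in Lemma~\ref{lem:connect}(2) are invertible so that the square genuinely transfers kernels. Once these points are in place, the remaining steps are a routine diagram chase.
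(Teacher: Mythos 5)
Your proposal is correct and follows essentially the same route as the paper's proof: reduce to the commutativity identity $\eta_{\tau^-K,Y}=\gamma_{K,Y}\circ\delta_{K,Y}$ (bijectivity coming from Lemma~\ref{lem:poset-iso} and the construction of $\gamma_{K,Y}$), apply Lemma~\ref{lem:connect}(2) to $\xi_d$, and match $\Ker c(\tau^-K,\xi_d)=\Im\Homcu(\tau^-K,d)$ with $\Ker Dc(\xi_d,K)=D\bigl(\Extc(Y,K)/\Im c(\xi_d,K)\bigr)$ via $\psi_{K,Y}$. Your explicit remarks that the right-hand vertical map of the square must be invertible to transfer kernels, and that projectively trivial morphisms factor through the deflation $d$, are points the paper leaves implicit but are correctly handled.
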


\begin{proof}
  It is sufficient to show that the triangle is commutative.
  Given any deflation $d \colon X \to Y$, Lemma~\ref{lem:exact} gives the exact sequence of $k$-modules
  \[
    \Homcu(\tau^-K, X) \xrightarrow{\Homcu(\tau^-K, d)}
    \Homcu(\tau^-K, Y) \xrightarrow{c(\tau^-K, \xi_d)}
    \Extc(\tau^-K, \Ker d).
  \]
  Then
  \begin{equation}\label{eq:eta(d)}
    \eta_{\tau^-K, Y} ([d\rangle)
    = \Im \Homcu(\tau^-K, d)
    = \Ker c(\tau^-K, \xi_d).
  \end{equation}

  Applying $D$ to the exact sequence of $k$-modules
  \[
    \Homco(X, K) \xrightarrow{c(\xi_d, K)} \Extc(Y, K) \To \Extc(Y, K)/\Im c(\xi_d, K) \To 0,
  \]
  we obtain
  \[
    D(\Extc(Y, K)/\Im c(\xi_d, K)) = \Ker D c(\xi_d, K).
  \]
  By the commutative diagram in Lemma~\ref{lem:connect}(\ref{lem:connect:2}), we have
  \begin{equation}\label{eq:psi(Ker)}
    \psi_{K, Y} (\Ker c(\tau^-K, \xi_d))
    = \Ker D c(\xi_d, K)
    = D(\Extc(Y, K)/\Im c(\xi_d, K)).
  \end{equation}
  Then
  \[\begin{split}
    (\gamma_{K, Y} \circ \delta_{K, Y}) ([d\rangle)
    &= \psi_{K, Y}^{-1} (D(\Extc(Y, K)/\Im c(\xi_d, K)))\\
    &= \Ker c(\tau^-K, \xi_d)\\
    &= \eta_{\tau^-K, Y} ([d\rangle).
  \end{split}\]
  Here, the first equality holds by the definitions of $\delta_{K, Y}$ and $\gamma_{K, Y}$.
  The second and the third ones are just (\ref{eq:psi(Ker)}) and (\ref{eq:eta(d)}), respectively.
\end{proof}

\section*{Acknowledgements}

The author thanks Professor~Xiao-Wu Chen for his encouragement and suggestions.
He also thanks the referees for many helpful suggestions and comments.
This work was supported by the National Natural Science Foundation of China (Grant No.~11901545).

%% ----------------------------------------------------------------
%\bibliographystyle{../bst/plain}
%\bibliography{../bib/math,../bib/preprint}

\end{document}